\title{The Rouquier Dimension of Quasi-Affine Schemes}
\author{Noah Olander}
\date{}
\newtheorem{thm}{Theorem}
\newtheorem{prop}{Proposition}
\newtheorem{lemma}{Lemma}
\theoremstyle{definition}
\newtheorem*{remark}{Remark}
\begin{document}

\maketitle 

\begin{abstract}
    We prove that for $X$ a regular quasi-affine scheme of dimension $d$, $\mathcal{O}_X$ is a $d$-step generator of $D^b_{coh}(X)$, establishing Orlov's conjecture in this case. We prove something weaker in the projective case. The main techniques are a spectral sequence argument borrowed from topology and the converse ghost lemma, both suitably adapted to work in this setting. Along the way we prove that on a regular scheme $X$ of dimension $d < \infty$ any composition of $d+1$ morphisms of $D^b_{coh}(X)$ which are zero on cohomology sheaves is zero.
\end{abstract}

\section{Introduction}

In this paper we prove that for a regular quasi-affine scheme $X$ of dimension $d < \infty$, $D^b_{coh}(X) = \langle \mathcal{O}_X \rangle _{d+1}$, which implies Orlov's conjecture \cite[Conjecture 10]{Orl09} in the quasi-affine case. This is already known when $X$ is affine by \cite[Proposition 3.3]{ELAGIN2021334}, but the quasi-affine case does not follow since a regular quasi-affine scheme need not be an open of a regular affine scheme. In particular, if $X$ is a regular projective variety over a field and $U$ is the affine cone over $X$ with the vertex removed, our result implies that $\mathrm{Rdim}(U) = \mathrm{dim}(U) = \mathrm{dim}(X)+1$. As this may suggest, our methods also show something in the (quasi-)projective case. Namely, for a regular quasi-projective scheme of dimension $d$ we will see that every object of $D^b_{coh}(X)$ can be built from the objects $\{\mathcal{O}_X(n)\}_{n \in \mathbf{Z}}$ using at most $d$ cones. If one could show that only finitely many of the $\mathcal{O}_X(n)$ sufficed (as is the case on a smooth projective curve by \cite{Orl09}), this would prove Orlov's conjecture. Of course, that is exactly the difficult part, but we still hope that this paper will be useful in clarifying what is hard about Orlov's conjecture and what is easy.

Along the way we prove Theorem \ref{thm-ghost} which is interesting in its own right. It says that if $X$ is a Noetherian regular scheme of dimension $d$ and $K_0 \to K_1 \to \cdots \to K_{d+1}$ are maps in $D^b_{coh}(X)$ which are zero on cohomology sheaves, then the composition $K_0 \to K_{d+1}$ is zero. The argument proving this occurs in the proof of \cite[Proposition 4.5]{CHRISTENSEN1998284}, so it is possible that Theorem \ref{thm-ghost} is known to experts but we have not found a reference. 

Next we show how Theorem \ref{thm-ghost} leads to a simple proof of Orlov's conjecture for quasi-affine schemes. Using the fact that $\mathcal{O}_X$ is ample on a quasi-affine scheme $X$, we show that if a morphism $K \to L$ in $D^b_{coh}(X)$ is an $\mathcal{O}_X$-ghost, then it is zero on cohomology sheaves. Hence in this case, Theorem \ref{thm-ghost} implies that the composition of $d+1$ $\mathcal{O}_X$-ghosts vanishes, so we should be able to conclude by some version of the converse ghost lemma \cite[Theorem 4]{article}. In fact we do not quite meet the hypotheses of loc. cit. so we supply a simple argument which proves a different form of the converse ghost lemma. 

The author would like to thank Johan de Jong for many enlightening conversations about Rouquier dimension including a very useful one concerning the converse ghost lemma. The author would also like to point out the PhD thesis \cite{letz2020generation} which contains similar arguments to the ones in this paper in the affine setting.

\section{Proofs of the Theorems}
Let $\mathcal{A}$ be an abelian category and $\varphi: K \to L$ a map in $D( \mathcal{A})$. We would like to know if $\varphi = 0$. An obvious necessary condition is that $H^n(\varphi) : H^n(K) \to H^n(L)$ be zero for all $n$, but this is not sufficient: Consider any nonzero map $A \to A[1]$ with $A \in \mathcal{A}$. However it turns out that if all the maps $H^n(\varphi)$ vanish, we may associate to $\varphi$ a sequence of Ext classes $\xi_n \in \mathrm{Ext}^1_{\mathcal{A}} (H^n(K) , H^{n-1}(L))$ whose vanishing gives another necessary condition for $\varphi$ to be zero. One may expect that if all the $\xi_n$ vanish then we could associate to $\varphi$ a sequence of $\mathrm{Ext}^2$-classes, and so forth. This is not true and the failure is caused by non-zero differentials in a spectral sequence. The following proposition gives a corrected statement. We let $FAb$ denote the category of filtered abelian groups and $Ab$ the category of abelian groups. 

\begin{prop}
\label{prop-obstructions}
Let $\mathcal{A}$ be an abelian category with enough injectives. Then for each $K, L \in D^b(\mathcal{A})$ there is a decreasing filtration $F$ on $\mathrm{Hom}_{D(\mathcal{A})} (K, L)$, which is natural in the sense that it arises from a functor $D^b(\mathcal{A})^{opp} \times D^b(\mathcal{A}) \to FAb$ whose composition with the forgetful functor $FAb \to Ab$ is the $\mathrm{Hom}$-functor, and which satisfies for $K, L, M \in D^b(\mathcal{A})$:
    \renewcommand\labelenumi{(\theenumi)}
    \begin{enumerate}
        \item $F^0\mathrm{Hom}_{D(\mathcal{A})}(K, L) = \mathrm{Hom}_{D(\mathcal{A})}(K, L)$ and $F^p\mathrm{Hom}_{D(\mathcal{A})}(K, L) = 0$ for $p \gg 0$.
        \item If $f \in F^p\mathrm{Hom}_{D(\mathcal{A})}(K, L)$ and $g \in F^q \mathrm{Hom}_{D(\mathcal{A})} (L, M)$ then $g \circ f \in F^{p+q}\mathrm{Hom}_{D(\mathcal{A})}(K, M)$.
        \item $F^p\mathrm{Hom}_{D(\mathcal{A})}(K, L)/F^{p+1}\mathrm{Hom}_{D(\mathcal{A})}(K, L)$ is a subquotient of $\prod _{n \in \mathbf{Z}}\mathrm{Ext}^p_{\mathcal{A}} (H^n(K), H^{n-p}(L))$.
        \item $F^1\mathrm{Hom}_{D(\mathcal{A})}(K, L) = \{\varphi \in \mathrm{Hom}_{D(\mathcal{A})} (K, L) : H^n(\varphi) = 0 \text{ for all } n \in \mathbf{Z}\}$. 
       
    \end{enumerate}
\end{prop}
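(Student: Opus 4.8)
The plan is to realize $\mathrm{Hom}_{D(\mathcal{A})}(K,L)$ as the degree-$0$ cohomology of a Hom-complex and to let $F$ be the filtration induced by an injective resolution of $L$, organized so that the associated graded computes Ext groups between cohomology objects. Concretely, I would fix a bounded complex $K^\bullet$ representing $K$ and a Cartan--Eilenberg resolution $J^{\bullet,\bullet}$ of a bounded complex representing $L$, with $i$ the internal degree and $j\geq 0$ the resolution degree; its total complex $\mathrm{Tot}(J)$ is a complex of injectives quasi-isomorphic to $L$, so $\mathrm{Hom}_{D(\mathcal{A})}(K,L[n]) = H^n(\mathrm{Hom}^\bullet(K^\bullet,\mathrm{Tot}(J)))$. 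On $\mathrm{Tot}(J)$ I take the decreasing filtration $W_p = \mathrm{Tot}(J^{\bullet,\geq p})$ by resolution degree and let $F$ be the induced filtration on $H^0$, so that $F^p = \mathrm{im}(\mathrm{Hom}_{D(\mathcal{A})}(K,W_p)\to \mathrm{Hom}_{D(\mathcal{A})}(K,L))$. The key computation is that the spectral sequence of this filtered complex reads
\[ E_2^{p,q} = \bigoplus_{n} \mathrm{Ext}^p_{\mathcal{A}}(H^n(K), H^{n+q}(L)) \Longrightarrow \mathrm{Hom}_{D(\mathcal{A})}(K,L[p+q]). \]
This falls out because applying $\mathrm{Hom}(K^\bullet,-)$ to the $p$-th row uses the splitness of that row (its cocycles, coboundaries, and cohomology are injective), giving $E_1^{p,q} = \bigoplus_i \mathrm{Hom}_{\mathcal{A}}(H^{i-q}(K), \mathcal{I}^{i}_{p})$, where $\mathcal{I}^{i}_{\bullet}$ is an injective resolution of $H^i(L)$, with $d_1$ the resolution differential.

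Given this, properties (1), (3), (4) are formal. For (1), since $K,L\in D^b(\mathcal{A})$ the term $E_2^{p,-p} = \bigoplus_n \mathrm{Ext}^p(H^n(K),H^{n-p}(L))$ is nonzero only for $p$ in a bounded interval, so the spectral sequence is bounded, hence convergent with $F$ finite, $F^0 = \mathrm{Hom}$ and $F^p = 0$ for $p\gg 0$. For (3), $\mathrm{gr}^p\mathrm{Hom}_{D(\mathcal{A})}(K,L) = E_\infty^{p,-p}$ is a subquotient of $E_2^{p,-p}\subseteq \prod_n \mathrm{Ext}^p(H^n(K),H^{n-p}(L))$. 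For (4), the corner $(0,0)$ receives no differentials, so $E_\infty^{0,0}\hookrightarrow E_2^{0,0} = \prod_n \mathrm{Hom}(H^n(K),H^n(L))$, and the resulting edge map $\mathrm{Hom}_{D(\mathcal{A})}(K,L)\to \prod_n \mathrm{Hom}(H^n(K),H^n(L))$ is $\varphi\mapsto (H^n(\varphi))_n$; hence $F^1 = \ker$ is exactly the set of maps vanishing on all cohomology. Identifying this edge map with $(H^n(\varphi))_n$ is the one point here that needs checking, but it is standard.

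The main obstacle is property (2) together with genuine naturality, and I would treat them uniformly through the filtered Hom-complex. Filter $\mathrm{Hom}^\bullet(\mathrm{Tot}(J_L),\mathrm{Tot}(J_M))$ by $\mathrm{Fil}^r = \{\phi : \phi(W_a)\subseteq W_{a+r} \text{ for all } a\}$, and filter the source $K$ trivially in degree $0$. Composition of Hom-complexes then sends $\mathrm{Fil}^p\otimes \mathrm{Fil}^q \to \mathrm{Fil}^{p+q}$ on the nose, because the degrees of filtered chain maps add; so multiplicativity is automatic provided the filtration that $\mathrm{Fil}$ induces on $\mathrm{Hom}_{D(\mathcal{A})}$ agrees with $F$. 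Since $K$ is filtered trivially, for the source $K$ the two filtrations manifestly coincide, so any $f\in F^p$ already has a $\mathrm{Fil}^p$-representative; the real content is the strictness statement $F^q = \mathrm{Fil}^q$ on $\mathrm{Hom}_{D(\mathcal{A})}(L,M)$, namely that a morphism factoring in $D(\mathcal{A})$ through $W_q\to M$ can be represented by a chain map $\mathrm{Tot}(J_L)\to \mathrm{Tot}(J_M)$ raising the resolution filtration by $q$. I expect this to be the crux, and I would prove it using the filtered comparison theorem for Cartan--Eilenberg resolutions: a $D(\mathcal{A})$-morphism lifts to a map of resolutions compatible with the resolution-degree filtration, uniquely up to filtered homotopy. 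The same comparison theorem, applied to identity-inducing maps, yields independence of the chosen resolution and functoriality in $L$ (functoriality in $K$ being precomposition), thereby upgrading $F$ to a functor to $FAb$.
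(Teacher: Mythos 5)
Your construction of the spectral sequence is sound and your handling of (1), (3), (4) is essentially fine: filtering $\mathrm{Tot}(J)$ by Cartan--Eilenberg resolution degree does give $E_2^{p,q}=\prod_n\mathrm{Ext}^p(H^n(K),H^{n+q}(L))$ (note: product, not direct sum, and for (1) you should say a word about why $\bigcap_pF^p=0$, since the resolution is unbounded in the $j$-direction; this follows because $\mathrm{Hom}_{D(\mathcal{A})}(K,W_p)=0$ once $W_p$ is concentrated in degrees above the amplitude of $K$). You also correctly isolate the crux: representing a class of $F^q\mathrm{Hom}_{D(\mathcal{A})}(L,M)$ by a chain map $\mathrm{Tot}(J_L)\to\mathrm{Tot}(J_M)$ carrying $W_a$ into $W_{a+q}$ for \emph{all} $a$, not just $a=0$.

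That step is where there is a genuine gap. The ``filtered comparison theorem'' you invoke does not exist in the form you need, because the resolution-degree filtration is a stupid filtration in the $j$-direction and is not intrinsic to $L$: one has $\mathrm{gr}^p_W\mathrm{Tot}(J_L)\simeq\prod_n I^p_{H^n(L)}[-n-p]$, the \emph{individual terms} of the chosen injective resolutions of the $H^n(L)$. Consequently a quasi-isomorphism $L'\to L$ induces a comparison map $\mathrm{Tot}(J_{L'})\to\mathrm{Tot}(J_L)$ which is \emph{not} a filtered quasi-isomorphism (its $\mathrm{gr}^p$ is the map $I^p_{H^n(L')}\to I^p_{H^n(L)}$, not an isomorphism), so it cannot be inverted compatibly with $W$; since a morphism in $D(\mathcal{A})$ is a roof, it therefore does not lift to a $W$-compatible chain map by any comparison-theorem argument, and uniqueness ``up to filtered homotopy'' fails for the same reason. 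This threatens not only (2) but the well-definedness and functoriality of $F$ itself, which the proposition requires. The paper avoids this exact problem by filtering $K$ and $L$ by their canonical truncation filtrations, whose graded pieces $H^{-p}(K)[p]$ \emph{are} intrinsic, and working in Illusie's filtered derived category with filtered injective resolutions; there the needed strict representability is supplied by the identification $\mathrm{Hom}_{DF(\mathcal{A})}(K,L)=\mathrm{Hom}_{D(\mathcal{A})}(K,L)$ of \cite[3.1.4, 3.1.6]{BBD}. To salvage your route you would have to prove that your $W$-filtration on $\mathrm{Hom}_{D(\mathcal{A})}(K,L)$ coincides with the one induced by the canonical filtration (e.g.\ via Deligne's d\'ecalage, which turns the stupid filtration of an injective resolution into the canonical one); but that comparison is real work and is precisely what your sketch omits.
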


\begin{remark}
The proposition gives an enrichment of $D^b(\mathcal{A})$ in the symmetric monoidal category of filtered abelian groups; recall that the tensor product in this category is defined by $F^k(A \otimes B) = \sum _{i + j = k} \mathrm{Im} (F^iA \otimes F^jB \to A \otimes B)$.
\end{remark}

We prove Proposition \ref{prop-obstructions} in Appendix \ref{section-appendix}. The proof uses a spectral sequence
$$
E_1^{p,q} = \prod_{n \in \mathbf{Z}} \mathrm{Ext}^{2p+q} (H^n(K), H^{n-p}(L)) \implies \mathrm{Ext}^{p+q}(K, L),
$$
and is not difficult but we need to use the definition of the spectral sequence to prove (2).

\begin{thm}
\label{thm-ghost}
Let $X$ be a Noetherian regular scheme of dimension $d < \infty$. Let $K_0 \to K_1 \to \cdots \to K_{d+1}$ be morphisms in $D^b_{coh} (X)$ whose induced maps on cohomology sheaves vanish. Then the composition $K_0 \to K_{d+1}$ is zero. 
\end{thm}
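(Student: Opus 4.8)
The plan is to deduce the theorem from Proposition \ref{prop-obstructions} together with a vanishing statement for $\mathrm{Ext}$ groups on $X$. I would apply the proposition with $\mathcal{A} = \mathrm{QCoh}(X)$, which has enough injectives since $X$ is Noetherian; because the inclusion $D^b_{coh}(X) \hookrightarrow D(\mathrm{QCoh}(X))$ is fully faithful and the cohomology sheaves $H^n(K_i)$ are coherent, all the $\mathrm{Hom}$ and $\mathrm{Ext}$ groups below may be computed there. Each of the given maps $K_i \to K_{i+1}$ vanishes on cohomology sheaves, so by property (4) it lies in $F^1\mathrm{Hom}(K_i, K_{i+1})$. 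By the multiplicativity property (2), the composite $K_0 \to K_{d+1}$ is a composition of $d+1$ elements each lying in $F^1$, and therefore lies in $F^{d+1}\mathrm{Hom}(K_0, K_{d+1})$. Thus it suffices to prove that $F^{d+1}\mathrm{Hom}(K_0, K_{d+1}) = 0$.

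By property (1) the filtration is eventually zero, so $F^{d+1} = 0$ will follow once we know that $F^p/F^{p+1} = 0$ for every $p \geq d+1$. By property (3) this graded piece is a subquotient of $\prod_{n \in \mathbf{Z}} \mathrm{Ext}^p_X(H^n(K_0), H^{n-p}(K_{d+1}))$, so the whole theorem reduces to the following vanishing claim: for coherent sheaves $\mathcal{F}, \mathcal{G}$ on $X$ one has $\mathrm{Ext}^p_X(\mathcal{F}, \mathcal{G}) = 0$ for all $p > d$.

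To prove the claim I would run the local-to-global $\mathrm{Ext}$ spectral sequence
$$E_2^{i,j} = H^i(X, \mathcal{E}xt^j(\mathcal{F}, \mathcal{G})) \implies \mathrm{Ext}^{i+j}_X(\mathcal{F}, \mathcal{G}).$$
The key point is a support estimate: the sheaf $\mathcal{E}xt^j(\mathcal{F}, \mathcal{G})$ is supported in codimension at least $j$. Indeed, its stalk at a point $x$ is $\mathrm{Ext}^j_{\mathcal{O}_{X,x}}(\mathcal{F}_x, \mathcal{G}_x)$, and since $\mathcal{O}_{X,x}$ is a regular local ring its global dimension equals $\dim \mathcal{O}_{X,x}$; hence the stalk vanishes unless $j \leq \dim \mathcal{O}_{X,x}$. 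Concatenating a chain of generizations of such a point with a chain of specializations inside its closure shows that the support of $\mathcal{E}xt^j(\mathcal{F}, \mathcal{G})$ has dimension at most $d - j$ (this uses only $\dim X = d$, not any catenary hypothesis). By Grothendieck's vanishing theorem the cohomology of a sheaf vanishes in degrees above the dimension of its support, so $E_2^{i,j} = 0$ whenever $i > d - j$, that is, whenever $i + j > d$. Therefore $\mathrm{Ext}^p_X(\mathcal{F}, \mathcal{G}) = 0$ for $p > d$, which completes the proof.

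I expect the support estimate to be the main obstacle, and it is precisely what produces the sharp bound $d$ rather than $2d$. A naive use of the spectral sequence, invoking only $\mathcal{E}xt^j = 0$ for $j > d$ and $H^i = 0$ for $i > d$ separately, would force vanishing merely for $p > 2d$ and hence would require $2d+1$ maps in the statement. Trading cohomological degree for codimension, so that the two vanishing ranges interlock exactly along the line $i + j = d$, is what yields the correct count of $d+1$ ghost maps.
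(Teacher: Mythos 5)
Your argument is correct and follows the same route as the paper: apply Proposition \ref{prop-obstructions} with $\mathcal{A} = \mathrm{QCoh}(X)$, place each map in $F^1$ via (4), the composite in $F^{d+1}$ via (2), and kill $F^{d+1}$ by combining (1) and (3) with the vanishing $\mathrm{Ext}^p_X(\mathcal{F},\mathcal{G})=0$ for $p>d$. The only difference is that the paper simply cites this vanishing (Stacks Project Tag 0FZ3), whereas you prove it via the local-to-global $\mathrm{Ext}$ spectral sequence together with the estimate that $\mathcal{E}xt^j(\mathcal{F},\mathcal{G})$ is supported in dimension at most $d-j$; that supplementary argument is also correct.
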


\begin{proof}
Note that $D^b_{coh} (X)$ is a full subcategory of $D(\mathrm{QCoh}(X))$ by \cite[\href{https://stacks.math.columbia.edu/tag/09T4}{Tag 09T4}]{stacks-project} and $\mathrm{QCoh}(X)$ is an abelian category with enough injectives. Thus we may use the filtration 
$$
\mathrm{Hom}(K_0, K_{d+1}) = F^0 \supset F^1 \supset F^2 \supset \cdots
$$
of Proposition \ref{prop-obstructions}. By \cite[\href{https://stacks.math.columbia.edu/tag/0FZ3}{Tag 0FZ3}]{stacks-project} we have $\mathrm{Ext}^{i}(\mathcal{F}, \mathcal{G}) = 0$ for $i > d$ and $\mathcal{F}, \mathcal{G}$ coherent sheaves on $X$. Therefore by (3) of Proposition \ref{prop-obstructions}, $F^{d+1} = F^{d+2} = \cdots.$ Since $F^p = 0$ for $p \gg 0$, in fact $F^{d+1} = 0$. Then by (4) each $K_i \to K_{i +1}$ is in $F^1\mathrm{Hom} (K_i , K_{i+1}),$ so that by (2) the composition $K_0 \to K_{d+1}$ is in $F^{d+1}\mathrm{Hom}(K_0, K_{d+1}) = 0$ and we are done.
\end{proof}

\begin{lemma}
\label{lemma-colim}
Let $X$ be a quasi-compact and quasi-separated scheme. Let $\mathcal{L}$ be an invertible sheaf and $s \in H^0(X, \mathcal{L}).$ %Assume $X_s = \{s \neq 0\}$ is quasi-compact. 
Then for $P \in D_{perf} (X)$ and $K \in D_{QCoh}(X)$,
$$
\mathrm{Hom}_{D(X_s)}(P|_{X_s}, K|_{X_s}) = \mathrm{colim}_{n\geq 0} \mathrm{Hom}_{D(X)}(P \otimes^{\mathbf{L}}_{\mathcal{O}_X}\mathcal{L}^{\otimes -n}, K),
$$
where $X_s = \{s \neq 0\}$ and the transition maps are given by multiplication by $s$.
\end{lemma}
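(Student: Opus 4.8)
The plan is to pass to the open immersion $j \colon X_s \hookrightarrow X$ and exploit the adjunction between restriction and derived pushforward. First I would record that $X_s$ is quasi-compact: covering $X$ by finitely many affine opens $U_i$ on which $\mathcal{L}$ is trivial, each $X_s \cap U_i$ is a standard open of the affine scheme $U_i$, hence quasi-compact, so $X_s$ is a finite union of quasi-compact opens. Thus $j$ is quasi-compact and quasi-separated, so $Rj_*$ preserves quasi-coherent cohomology and is right adjoint to $j^* = (-)|_{X_s}$ on $D_{QCoh}$. The left-hand side is therefore
$$
\mathrm{Hom}_{D(X_s)}(P|_{X_s}, K|_{X_s}) = \mathrm{Hom}_{D(X)}(P, Rj_* j^* K),
$$
and the whole lemma reduces to identifying $Rj_* j^* K$ with a homotopy colimit.

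For the right-hand side I would first use that $\mathcal{L}^{\otimes -n}$ is invertible, hence dualizable with dual $\mathcal{L}^{\otimes n}$, to rewrite each term as
$$
\mathrm{Hom}_{D(X)}(P \otimes^{\mathbf{L}}_{\mathcal{O}_X} \mathcal{L}^{\otimes -n}, K) = \mathrm{Hom}_{D(X)}(P, K \otimes^{\mathbf{L}}_{\mathcal{O}_X} \mathcal{L}^{\otimes n}),
$$
under which the transition map ``multiplication by $s$'' becomes the map induced by $s \colon K \otimes \mathcal{L}^{\otimes n} \to K \otimes \mathcal{L}^{\otimes (n+1)}$. Since $P$ is perfect and $X$ is quasi-compact and quasi-separated, $P$ is a compact object of $D_{QCoh}(X)$, so $\mathrm{Hom}_{D(X)}(P, -)$ commutes with the homotopy colimit (apply compactness to the telescope triangle). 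Hence the right-hand side equals $\mathrm{Hom}_{D(X)}(P, \mathrm{hocolim}_n (K \otimes^{\mathbf{L}}_{\mathcal{O}_X} \mathcal{L}^{\otimes n}))$, and it remains to produce a natural isomorphism
$$
\mathrm{hocolim}_n \, (K \otimes^{\mathbf{L}}_{\mathcal{O}_X} \mathcal{L}^{\otimes n}) \xrightarrow{\ \sim\ } Rj_* j^* K.
$$

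To build the map, note that $s$ trivializes $\mathcal{L}|_{X_s}$, so $j^*(K \otimes \mathcal{L}^{\otimes n}) \cong j^* K$ compatibly with multiplication by $s$; the adjunction units $K \otimes \mathcal{L}^{\otimes n} \to Rj_* j^*(K \otimes \mathcal{L}^{\otimes n}) \cong Rj_* j^* K$ are then compatible with the transition maps and assemble into a map out of the colimit. The main work — and the step I expect to be the real obstacle — is showing this map is an isomorphism. Since both sides lie in $D_{QCoh}(X)$, this can be checked after restricting to an affine open cover; restriction to an open commutes with $\mathrm{hocolim}$ (it is a left adjoint) and with $Rj_*$ (flat base change along open immersions), so I would reduce to $X = \mathrm{Spec}(A)$ with $\mathcal{L} \cong \mathcal{O}_X$ and $s$ corresponding to some $f \in A$. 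There $j \colon X_f = \mathrm{Spec}(A_f) \to X$ is affine, so $Rj_* = j_*$ has no higher cohomology, the homotopy colimit becomes the ordinary filtered colimit $\mathrm{colim}_n K \cong K \otimes_A A_f$ computed termwise (localization is exact), and both sides are identified with $K \otimes_A A_f$. Some care is needed to ensure the homotopy colimit is computed correctly at the level of complexes and that these identifications are natural enough to globalize, but no genuinely new idea is required beyond the affine computation.
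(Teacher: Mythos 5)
Your proof is correct, and its skeleton coincides with the paper's: reduce via the adjunction $\mathrm{Hom}_{D(X_s)}(P|_{X_s}, K|_{X_s}) = \mathrm{Hom}_{D(X)}(P, Rj_*j^*K)$, use compactness of the perfect complex $P$ in $D_{QCoh}(X)$ to pull $\mathrm{Hom}(P,-)$ through a homotopy colimit, and move $\mathcal{L}^{\otimes n}$ across by dualizability. The one place you diverge is the identification $Rj_*j^*K \simeq \mathrm{hocolim}_n\, K \otimes^{\mathbf{L}} \mathcal{L}^{\otimes n}$: the paper gets this in one stroke by first computing $Rj_*\mathcal{O}_{X_s} = j_*\mathcal{O}_{X_s} = \mathrm{hocolim}_n\, \mathcal{L}^{\otimes n}$ (using that $j$ is affine together with the localization formula $R_f = \mathrm{colim}(R \xrightarrow{f} R \xrightarrow{f} \cdots)$) and then invoking the projection formula plus the fact that $-\otimes^{\mathbf{L}} K$ commutes with homotopy colimits; you instead build a global comparison map from the adjunction units and the trivialization of $\mathcal{L}|_{X_s}$ by $s$, and verify it is an isomorphism after restricting to affines trivializing $\mathcal{L}$. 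Both routes are valid: the projection-formula argument is shorter and avoids any local verification, while yours substitutes a concrete affine computation (where everything collapses to $K \otimes_A A_f$) for the projection formula, at the cost of the bookkeeping you flag — constructing the map out of the telescope and checking compatibility of the base-change identifications — none of which presents a real obstruction.
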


\begin{proof}
Denote $j: X_s \to X$ the immersion. Since $j$ is affine and using the formula $R_f = \mathrm{colim} ( R \xrightarrow{f} R \xrightarrow{f} \cdots )$,
$$
Rj_*(\mathcal{O}_{X_s}) = j_*(\mathcal{O}_{X_s}) = \mathrm{colim}_{n \geq 0} \mathcal{L}^{\otimes n} = \mathrm{hocolim}_{n \geq 0}\mathcal{L}^{\otimes n},
$$ where the transition maps are given by multiplication by $s$. Then by the projection formula \cite[\href{https://stacks.math.columbia.edu/tag/08EU}{Tag 08EU}]{stacks-project} and since derived tensor products commute with homotopy colimits (because they commute with direct sums) we get $Rj_*(K|_{X_s}) = K \otimes ^{\mathbf{L}}_{\mathcal{O}_X} Rj_* (\mathcal{O}_{X_s}) = \mathrm{hocolim}_{n \geq 0} K \otimes ^{\mathbf{L}}_{\mathcal{O}_X} \mathcal{L}^{\otimes n}$. Since $P$ is compact in $D_{QCoh}(X)$ (\cite[Theorem 3.1.1]{BvdB}),
$$
\mathrm{Hom}_{D(X_s)}(P|_{X_s}, K|_{X_s}) = \mathrm{Hom}_{D(X)}(P, Rj_*(K|_{X_s})) = \mathrm{colim}_{n \geq 0}\mathrm{Hom}_{D(X)}(P, K \otimes _{\mathcal{O}_X}^{\mathbf{L}} \mathcal{L}^{\otimes n}).
$$
Moving $\mathcal{L}^{\otimes n}$ to the other side yields the result.
\end{proof}

\begin{lemma}
\label{lemma-ample-sheaf}
Let $X$ be a scheme with an ample invertible sheaf $\mathcal{L}$. Let $K \in D_{QCoh} (X)$. Then there is a set $I$ and a morphism $\bigoplus_{i \in I} \mathcal{L}^{\otimes m_i}[n_i] \to K$ with $n_i, m_i \in \mathbf{Z}$, $m_i < 0$, which is surjective on cohomology sheaves. If $K$ is bounded and each cohomology sheaf is of finite type (for example if $X$ is Noetherian and $K \in D^b_{coh}(X)$), then $I$ may be taken finite. 
\end{lemma}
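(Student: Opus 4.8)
The plan is to build the morphism one cohomology degree at a time and then take the direct sum. Recall first the sheaf-level consequence of ampleness: every quasi-coherent $\mathcal{O}_X$-module $\mathcal{F}$ admits a surjection $\bigoplus_i \mathcal{L}^{\otimes m_i} \to \mathcal{F}$ with all $m_i < 0$, and this sum may be taken finite when $\mathcal{F}$ is of finite type and $X$ is Noetherian. Since a summand $\mathcal{L}^{\otimes m}[n']$ has cohomology only in degree $-n'$, its image under any map to $K$ can only meet $H^{-n'}(K)$; hence it suffices to produce, for each $n$, a morphism $\bigoplus_{i \in I_n} \mathcal{L}^{\otimes m_i}[-n] \to K$ with $m_i < 0$ which is surjective onto $H^n(K)$, and then to set $I = \coprod_n I_n$ and take the total morphism. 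On $H^n$ only the degree-$(-n)$ summands contribute, so the total morphism will be surjective on every cohomology sheaf.

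The subtle point is to realize such a morphism in $D(X)$. One's first instinct is to lift a sheaf surjection $\bigoplus \mathcal{L}^{\otimes m_i} \twoheadrightarrow H^n(K)$ to a morphism $\bigoplus \mathcal{L}^{\otimes m_i}[-n] \to K$, but this is obstructed: the obstruction lies in a group assembled from $H^p(X, H^q(K) \otimes \mathcal{L}^{\otimes -m_i})$ with $p \geq 2$ (equivalently in the higher differentials of the spectral sequence of Proposition \ref{prop-obstructions}), and on a non-proper $X$ these need not vanish even after twisting, so not every section of $H^n(K)$ is a permanent cycle. The key idea that sidesteps this is to surject onto \emph{cocycles} rather than onto cohomology. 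Represent $K$ by an actual complex $K^\bullet$ of quasi-coherent sheaves, and let $Z^n = \ker(d^n \colon K^n \to K^{n+1})$, a quasi-coherent subsheaf. Choose a surjection $\bigoplus_{i \in I_n} \mathcal{L}^{\otimes m_i} \twoheadrightarrow Z^n$ with $m_i < 0$. Each generator is a cocycle, hence defines an honest map of complexes $\mathcal{L}^{\otimes m_i}[-n] \to K^\bullet$ and therefore a genuine morphism in $D(X)$; no lifting problem arises precisely because a map landing in cocycles automatically commutes with the differential. Its effect on $H^n$ is the composite $\mathcal{L}^{\otimes m_i} \to Z^n \twoheadrightarrow H^n(K)$, so together these generators surject onto $H^n(K)$, as desired.

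It remains to assemble and to address finiteness. Summing over all $n$ gives the required $\bigoplus_{i \in I} \mathcal{L}^{\otimes m_i}[n_i] \to K$. For the finite statement, when $X$ is Noetherian and $K \in D^b_{coh}(X)$ one instead works with a bounded-above resolution $E^\bullet \to K$ whose terms are finite direct sums of negative powers of $\mathcal{L}$ (built by the usual descending induction using the finite form of the ample surjection); then each cocycle sheaf $Z^n(E^\bullet)$ is coherent, only finitely many $n$ have $H^n(K) \neq 0$, and each surjection onto $Z^n$ uses finitely many summands, so $I$ is finite. The point to check with care is the representability of $K$ by a complex of quasi-coherent sheaves, so that the cocycle sheaves make sense; this holds because a scheme with an ample invertible sheaf is quasi-compact and quasi-separated. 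The main conceptual obstacle, and the step I expect to require the most care, is exactly this passage from cohomology to cocycles, which converts a potentially obstructed derived lift into a tautological chain map.
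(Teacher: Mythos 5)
Your argument is correct, but it takes a genuinely different route from the paper's. The paper stays inside the derived category: it covers $X$ by affines $X_{s_i}$, uses the vanishing of higher cohomology on affines to produce morphisms $\mathcal{O}_{X_{s_i}} \to K|_{X_{s_i}}$ realizing generating sections of $H^0(K)|_{X_{s_i}}$ (no lifting obstruction over an affine), and then invokes Lemma \ref{lemma-colim} to extend each of these to a global morphism $\mathcal{L}^{\otimes m} \to K$ after multiplying by a power of $s_i$; this is why Lemma \ref{lemma-colim} appears in the paper at all. You instead pass to a model: represent $K$ by an honest complex of quasi-coherent sheaves (legitimate, since a scheme with an ample invertible sheaf is quasi-compact and separated, so $D(\mathrm{QCoh}(X)) \simeq D_{QCoh}(X)$), surject $\bigoplus \mathcal{L}^{\otimes m_i}$, $m_i<0$, onto the cocycle sheaf $Z^n$ using the standard consequence of ampleness, and observe that a map landing in cocycles is tautologically a chain map. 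Your diagnosis of why the naive lift of a surjection onto $H^n(K)$ is obstructed, and the cocycle trick that circumvents it, are both correct. What your approach buys is brevity and independence from Lemma \ref{lemma-colim}; what it costs is reliance on the equivalence with $D(\mathrm{QCoh}(X))$ and on the (standard, but externally quoted) fact that every quasi-coherent module on such an $X$ is a quotient of a direct sum of negative twists of $\mathcal{L}$ --- a fact whose usual proof is essentially the paper's local-extension argument in disguise.

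One small caveat: the second assertion of the lemma assumes only that $K$ is bounded with finite type cohomology sheaves, with Noetherian $X$ and $K \in D^b_{coh}(X)$ offered merely as an example, whereas your finiteness argument (coherent resolution $E^\bullet$, coherence of $\ker d^n$) genuinely uses Noetherianness. This is easily repaired without it: since $Z^n$ is the filtered union of its finite type quasi-coherent subsheaves ($X$ being quasi-compact and quasi-separated) and $H^n(K)$ is of finite type on a quasi-compact scheme, some finite type subsheaf $\mathcal{G} \subset Z^n$ already surjects onto $H^n(K)$, and a finite direct sum of negative twists surjects onto $\mathcal{G}$. The paper's construction gets the finite statement for free because it only ever chooses finitely many generating sections over each of finitely many affines.
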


%\begin{remark}
%This lemma has a simple proof when $X$ is an open of $\mathrm{Spec}(R)$ with $R$ Noetherian and $K \in D^b_{coh}(X)$: In this case the restriction $D^b_{coh}(\mathrm{Spec}(R)) \to D^b_{coh}(X)$ is essentially surjective so it suffices to solve the problem when $X = \mathrm{Spec}(R)$ is affine which is easy.
%\end{remark}

\begin{proof}
It suffices to show there exists a set $I$ and a morphism $\bigoplus_{i \in I}\mathcal{L}^{\otimes m_i} \to K$ which is surjective on $H^0$. %By assumption there is a ring $R$ and a finitely generated ideal $J = (f_1, \dots , f_r)$ such that $X = \mathrm{Spec}(R) \setminus V(J)$. Since $H^0(K)$ is quasi-coherent we may choose for each $i = 1 , \dots , r$ sections $s_{ij} \in H^0(D(f_i), H^0(K)|_{D(f_i)})$ which generate $H^0(K)|_{D(f_i)}$. Since $D(f_i)$ is affine we have
By assumption, there are sections $s_{i} \in H^0(X, \mathcal{L}^{\otimes k_i}) , i = 0 , \dots , r$ with $k_i > 0$ such that the open subschemes $X_{s_i} = \{s_i \neq 0\}$ are affine and cover $X$. Since $H^0(K)$ is quasi-coherent and $X_{s_i}$ is affine, we may choose sections $t_{ij} \in H^0(X_{s_i}, H^0(K))$ which generate $H^0(K)$ over $X_{s_i}$. Again using that $X_{s_i}$ is affine, we have
$$
\mathrm{Hom}(\mathcal{O}_{X_{s_i}}, K|_{X_{s_i}}) = H^0(X_{s_i}, H^0(K)),
$$
so for each $i,j$ there is a morphism $\varphi_{ij}: \mathcal{O}_{X_{s_i}} \to K|_{X_{s_i}}$ which on $H^0$ is given by the section $t_{ij}$. Then by Lemma \ref{lemma-colim} there exists an $N_{ij}<0$ and a morphism $\psi_{ij}: \mathcal{L}^{\otimes k_i \cdot N_{ij}} \to K$ whose restriction to $X_{s_i}$ is $\varphi_{ij}$. %that $s_{i}^{N_{ij}}\varphi_{ij}$ extends to a morphism $\psi_{ij}: \mathcal{O}_X \to K$: In fact by STACKS there exists $N_{ij}$ such that $f_i^{N_{ij}}\varphi_{ij}$ extends to a morphism $\mathcal{O}_{\mathrm{Spec}(R)} \to Rj_*(K)$ where $j: X \to \mathrm{Spec}(R)$ denotes the open immersion, and the restriction of this morphism to $X$ does the job. 
Now the morphism $\bigoplus_{i,j} \mathcal{L}^{\otimes k_i \cdot N_{ij} } \to K$ given by $(\psi_{ij})$ does the job. Note that if $H^0(K)$ is a finite type quasi-coherent sheaf then we need only finitely many sections $t_{ij}$, which proves the second statement.
\end{proof}

\begin{remark}
If instead of assuming $X$ has an ample invertible sheaf we assume only that $X$ has an ample family $\{\mathcal{L}_k \}$ of invertible sheaves, the same argument shows that for every $K \in D_{QCoh}(X)$ there is a set $I$ and a morphism $\bigoplus _i \mathcal{L}_{k_i} ^{\otimes m_i}[n_i] \to K$ with $m_i < 0$ which is surjective on cohomology sheaves, and that $I$ may be taken finite if $K$ is bounded with finite type cohomology sheaves.
\end{remark}

\begin{thm}
\label{thm-main}
Let $X$ be a Noetherian regular scheme of dimension $d < \infty$. Assume $X$ has an ample invertible sheaf $\mathcal{L}$. Then $D^b_{coh}(X) = \langle \{\mathcal{L}^{\otimes n}\}_{n \leq 0}\rangle _{d+1}$. In particular, if $X$ is quasi-affine, then $D^b_{coh}(X) = \langle \mathcal{O}_X \rangle _{d+1}$. 
\end{thm}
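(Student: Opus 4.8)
The plan is to combine the two main tools developed so far: Theorem~\ref{thm-ghost}, which says that any composition of $d+1$ maps that vanish on cohomology sheaves is zero, and Lemma~\ref{lemma-ample-sheaf}, which lets me approximate any object of $D^b_{coh}(X)$ by a finite direct sum of shifts of negative powers of $\mathcal{L}$ in a way that is surjective on cohomology. The strategy is a converse-ghost-lemma argument: I will build, for a given $K \in D^b_{coh}(X)$, a tower of distinguished triangles realizing $K$ as built from the subcategory $\langle \{\mathcal{L}^{\otimes n}\}_{n\le 0}\rangle_1$ using at most $d$ cones, and use Theorem~\ref{thm-ghost} to guarantee the process terminates after $d+1$ stages.

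First I would fix $K \in D^b_{coh}(X)$ and set $K_0 = K$. Using Lemma~\ref{lemma-ample-sheaf} (with the finiteness conclusion, valid since $X$ is Noetherian and $K_0 \in D^b_{coh}(X)$), I obtain a finite object $G_0 = \bigoplus_i \mathcal{L}^{\otimes m_i}[n_i]$ with $m_i < 0$ and a morphism $G_0 \to K_0$ which is surjective on all cohomology sheaves. I complete this to a distinguished triangle $G_0 \to K_0 \to K_1 \to G_0[1]$, so that $K_1$ is the cone of the surjection (up to shift); the key point is that the connecting map $K_1 \to G_0[1]$, equivalently the induced map $K_1 \to K_0[1]$ after rotating, is \emph{zero on cohomology sheaves} precisely because $G_0 \to K_0$ is surjective on cohomology. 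Here I should check that $K_1 \in D^b_{coh}(X)$, which holds because $G_0$ and $K_0$ are, and $D^b_{coh}(X)$ is closed under cones. Iterating, I produce a sequence of objects $K_0, K_1, \dots$ together with morphisms $K_{j+1} \to K_j[1]$ that are each zero on cohomology, where $G_j \to K_j$ is a Lemma~\ref{lemma-ample-sheaf} resolution of $K_j$ and $K_{j+1} = \mathrm{cone}(G_j \to K_j)$.

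The crux is to show the tower collapses after $d+1$ steps. Composing the maps $K_{d+1} \to K_d[1] \to K_{d-1}[2] \to \cdots \to K_0[d+1]$ (suitably shifted), I get a composition of $d+1$ morphisms each of which is zero on cohomology sheaves; by Theorem~\ref{thm-ghost} this composition vanishes. I expect the main obstacle to lie in converting this vanishing into the conclusion that $K_0$ is built from $G_0, \dots, G_d$ in $d+1$ stages, i.e.\ that $K_0 \in \langle \bigoplus_j G_j \rangle_{d+1} \subseteq \langle \{\mathcal{L}^{\otimes n}\}_{n\le 0}\rangle_{d+1}$. The mechanism is the standard converse-ghost / octahedral bookkeeping: vanishing of the composite $K_{d+1} \to K_0[d+1]$ means the map $G_d \to K_d$ can be ``lifted'' so that the extension data assembles $K_0$ out of the finitely many $G_j$ via $d$ cones. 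This is exactly where the paper promises a self-contained substitute for \cite[Theorem~4]{article}, so I would spell out the triangle manipulations rather than cite it, being careful that each $G_j$ genuinely lies in $\langle \{\mathcal{L}^{\otimes n}\}_{n \le 0} \rangle_1$ (it is a finite direct sum of shifts of such objects).

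Finally, for the quasi-affine case I would note that on a quasi-affine scheme $\mathcal{O}_X$ is ample, so I may take $\mathcal{L} = \mathcal{O}_X$; then $\mathcal{L}^{\otimes n} = \mathcal{O}_X$ for all $n$ and the generating set $\{\mathcal{L}^{\otimes n}\}_{n \le 0}$ reduces to the single object $\mathcal{O}_X$, giving $D^b_{coh}(X) = \langle \mathcal{O}_X \rangle_{d+1}$. The only point to verify is that on a quasi-affine scheme $\mathcal{O}_X$ is indeed ample, which follows since $X$ admits a quasi-compact open immersion into an affine scheme and global sections of $\mathcal{O}_X$ generate; I would cite the appropriate Stacks project tag for this. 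I do not anticipate difficulty in the reduction step itself; the substantive work is entirely in the converse-ghost construction of the previous paragraph.
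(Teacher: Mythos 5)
Your overall strategy is the paper's: approximate $K$ by Lemma~\ref{lemma-ample-sheaf}, build a tower of cones, kill the $(d+1)$-fold composite with Theorem~\ref{thm-ghost}, and assemble the conclusion by octahedra. But there are two real problems with the execution. First, you attach the ghost property to the wrong arrow. In the triangle $G_0 \to K_0 \to K_1 \to G_0[1]$, surjectivity of $H^n(G_0) \to H^n(K_0)$ forces $H^n(K_0) \to H^n(K_1)$ to vanish and, by exactness, forces $H^n(K_1) \to H^{n+1}(G_0)$ to be \emph{injective} --- so the connecting map $K_1 \to G_0[1]$ is injective on cohomology sheaves, not zero. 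Moreover there is no natural nonzero map ``$K_1 \to K_0[1]$'': the only candidate is the composite $K_1 \to G_0[1] \to K_0[1]$ of two consecutive arrows of a distinguished triangle, which is identically zero. Consequently your tower of maps $K_{j+1} \to K_j[1]$ and the composite $K_{d+1} \to K_0[d+1]$ are either ill-defined or trivially zero, and Theorem~\ref{thm-ghost} has nothing to act on. The fix is to run the tower forwards, as the paper does: the ghosts are the maps $K_i \to K_{i+1}$, and the composite to which Theorem~\ref{thm-ghost} applies is $K_0 \to K_{d+1}$.

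Second, the step you label ``standard converse-ghost / octahedral bookkeeping'' and promise to spell out is precisely the substantive content of the proof, and you do not supply it; you also misdescribe its mechanism (no lifting of $G_d \to K_d$ is involved). What is actually needed is: (i) an induction via the octahedral axiom showing that the cone of $K_0 \to K_i$ lies in $\langle \{\mathcal{L}^{\otimes n}\}_{n \leq 0}\rangle_i$, using that the cone of each $K_i \to K_{i+1}$ is $G_i[1] \in \langle \{\mathcal{L}^{\otimes n}\}_{n \leq 0}\rangle_1$; and (ii) the observation that since $K_0 \to K_{d+1}$ is zero, its cone is $K_{d+1} \oplus K_0[1]$, so $K_0$ lies in $\langle \{\mathcal{L}^{\otimes n}\}_{n \leq 0}\rangle_{d+1}$ because that subcategory is closed under direct summands and shifts. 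Without (i) and (ii) the argument does not close. The quasi-affine reduction at the end (take $\mathcal{L} = \mathcal{O}_X$, which is ample on a quasi-affine scheme) is correct and matches the paper.
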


Thus when $X$ is quasi-affine, $\mathrm{Rdim}(D^b_{coh}(X)) \leq d$. If $X$ is also of finite type over a field, we have $\mathrm{Rdim}(D^b_{coh}(X)) = d$ by \cite[Proposition 7.17]{rouquier_2008}. 

\begin{proof}
Note that the second statement follows from the first since $\mathcal{O}_X$ is ample on a quasi-affine scheme. % and since $\mathrm{Rdim}(D^b_{coh}(X)) \geq d$ is known by \cite[Proposition 7.17]{rouquier_2008}. It appears that our $X$ does not meet the hypotheses given there but in fact the proof shows that for $X$ a reduced Noetherian scheme, $\mathrm{Rdim}(D^b_{coh}(X)) \geq \mathrm{dim}(X)$. This is because \cite[Proposition 7.14]{rouquier_2008} which is used in the proof is true without the assumption that $A$ contain a field $k$. Namely, if $(A, \mathfrak{m})$ is a regular local ring of dimension $d$, then $A/\mathfrak{m} \not \in \langle A \rangle _{d}$. Proof: There is a nonzero map $A / \mathfrak{m} \to A / \mathfrak{m}[d]$ which by Yoneda's definition of Ext groups can be written as a composition $A/\mathfrak{m} \to M_1 [1] \to \cdots \to M_{d-1}[d-1] \to A/\mathfrak{m}[d]$ with each $M_i$ a finite $A$-module. Then each of the maps is an $A$-ghost so we conclude by the ghost lemma \cite[Lemma 2.2 (i)]{Beligiannis2013SOMEGL}.
Let's prove the first. Let $K \in D^b_{coh} (X)$. We will show $K \in \langle \{\mathcal{L}^{\otimes n}\}_{n \leq 0}\rangle _{d+1}$. Set $K = K_0.$ Choose a finite set $I$ and a morphism $\bigoplus_{i \in I} \mathcal{L}^{\otimes m_i} [n_i] \to K$ as in Lemma \ref{lemma-ample-sheaf} and let $K_1$ be the cone. Note that $K_0 \to K_1$ is zero on cohomology sheaves by construction, and its cone is in $\langle \{\mathcal{L}^{\otimes n}\}_{n \leq 0}\rangle _{1}$. Now repeat the process with $K = K_1$ and so on to obtain a sequence 
$$
K_0 \to K_1 \to \cdots \to K_{d+1}
$$
such that each $K_i \to K_{i+1}$ is zero on cohomology sheaves and has cone in $\langle \{\mathcal{L}^{\otimes n}\}_{n \leq 0}\rangle _{1}$. Thus $K_0 \to K_{d+1}$ is zero by Theorem \ref{thm-ghost}. We will prove by induction that the cone of $K_0 \to K_i$ is in $\langle \{\mathcal{L}^{\otimes n}\}_{n \leq 0}\rangle _{i}$. For $i = 1$ this is known and for $i = d+1$ this proves the Theorem: Since $K = K_0 \to K_{d+1}$ is zero the cone is isomorphic to $K_{d+1} \oplus K [1]$ and the category $\langle \{\mathcal{L}^{\otimes n}\}_{n \leq 0}\rangle _{d+1}$ is closed under direct summands and shifts. 

So assume known that the cone of $K_0 \to K_i$ is in $\langle \{\mathcal{L}^{\otimes n}\}_{n \leq 0}\rangle _{i}$. Then by the octahedral axiom there is a distinguished triangle
$$
C \to D \to E \to C[1]
$$
with $C$ a cone of $K_0 \to K_i$, $D$ a cone of $K_0 \to K_{i+1}$, and $E$ a cone of $K_i \to K_{i+1}$. Since $C \in \langle \{\mathcal{L}^{\otimes n}\}_{n \leq 0}\rangle _{i}$ and $E \in \langle \{\mathcal{L}^{\otimes n}\}_{n \leq 0}\rangle _{1}$ it follows that $D \in \langle \{\mathcal{L}^{\otimes n}\}_{n \leq 0}\rangle _{i+1}$, as needed. 
\end{proof}

\begin{remark}
If $X$ is a Noetherian regular scheme with affine diagonal, $X = \bigcup_{i} U_i$ is a finite affine open covering, and $D_i = X \setminus U_i$ with the reduced induced subscheme structure, then $\{\mathcal{O}_X(D_i)\}_i$ is an ample family of invertible sheaves on $X$. The argument above, with Lemma \ref{lemma-ample-sheaf} replaced by the remark following it, shows that $D^b_{coh}(X) = \langle \{\mathcal{O}_X(n D_i)\}_{i, n\leq 0} \rangle _{d+1}$. 
\end{remark}

\begin{appendices}

\section{Enrichment of \texorpdfstring{$D^b(\mathcal{A})$}{Db(A)}}
\label{section-appendix}

Let $\mathcal{A}$ be an abelian category with enough injectives. As in \cite[I, Chapter V]{cotangent}, let $DF(\mathcal{A})$ denote the filtered derived category of $\mathcal{A}$, whose objects are represented by $\mathcal{A}$-complexes $K^\bullet$ with a \emph{finite} decreasing filtration (i.e., there exists $i > 0$ such that for all $n$, $F^iK^n = 0$ and $F^{-i}K^n = K^n$). The full subcategory of $DF(\mathcal{A})$ spanned by those $K$ with $\mathrm{gr}^i(K) \in D^+(\mathcal{A})$ for each $i \in \mathbf{Z}$ is denoted $D^+F(\mathcal{A})$, and $D^bF(\mathcal{A}), D^-F(\mathcal{A})$ are defined similarly. We will let $Ab$ denote the category of abelian groups and $FAb$ the category of filtered abelian groups. Given $K \in DF(\mathcal{A})$ we will also write $K$ for its image in $D(\mathcal{A})$ but when taking Hom groups we will use subscripts to make it clear which category we are viewing the objects in.

%\begin{lemma}
%Let $\mathcal{A}$ be an abelian category. Let $K^\bullet, L^\bullet, M^\bullet$ be filtered complexes in $\mathcal{A}$. Then if $(f^n) \in F^p\mathrm{Hom}^i(K^\bullet, L^\bullet)$ and $(g^n) \in F^q\mathrm{Hom}^j(L^\bullet, M^\bullet),$ then $(g^{n+i} \circ f^n) \in F^{p+q}\mathrm{Hom}^{i+j}(K^\bullet, M^\bullet)$.
%\end{lemma}

\begin{lemma}
\label{lemma-dfil-is-enhanced}
For each $K \in DF(\mathcal{A})$ and $L \in D^+F(\mathcal{A}),$ there is a decreasing filtration $F$ on $\mathrm{Hom}_{D(\mathcal{A})} (K, L)$, which is natural in the sense that it arises from a functor (*) fitting into a commutative diagram
$$
\begin{tikzcd}
 DF(\mathcal{A}) ^{opp} \times D^+F(\mathcal{A}) \ar[r, "(*)"] \ar[d, "forget"]  &FAb \ar[d, "forget"] \\
 D(\mathcal{A}) ^{opp} \times D^+(\mathcal{A}) \ar[r]   &Ab, \\
 (K, L) \arrow[r, mapsto ] & \mathrm{Hom}_{D(\mathcal{A})}(K, L)
\end{tikzcd}
$$
and which satisfies for $K \in DF(\mathcal{A}), L, M \in D^+F(\mathcal{A})$:
    \renewcommand\labelenumi{(\theenumi)}
    \begin{enumerate}
        \item The filtration on $\mathrm{Hom}_{D(\mathcal{A})}( K, L)$ is finite. 
        \item If $f \in F^p\mathrm{Hom}_{D(\mathcal{A})}(K, L)$ and $g \in F^q \mathrm{Hom}_{D(\mathcal{A})} (L, M)$ then $g \circ f \in F^{p+q}\mathrm{Hom}_{D(\mathcal{A})}(K, M)$.
        \item $F^p\mathrm{Hom}_{D(\mathcal{A})}(K, L)/F^{p+1}\mathrm{Hom}_{D(\mathcal{A})}(K, L)$ is a subquotient of $\prod _{n \in \mathbf{Z}}\mathrm{Hom}_{D(\mathcal{A})} (\mathrm{gr}^n(K), \mathrm{gr}^{n+p}(L))$.
        \item Assume $\mathrm{Ext}^n_{D(\mathcal{A})}(\mathrm{gr}^i(K), \mathrm{gr}^j(L)) = 0$ when $n = 0,-1$ and $i>j$, for example if 
        
        \noindent $\mathrm{Ext}^n_{D(\mathcal{A})}(\mathrm{gr}^i(K)[-i], \mathrm{gr}^j(L)[-j]) = 0$ for $n< 0$. Then:
        \begin{enumerate}
            \item $F^0\mathrm{Hom}_{D(\mathcal{A})}(K, L) = \mathrm{Hom}_{D(\mathcal{A})}(K,L) = \mathrm{Hom}_{DF(\mathcal{A})}(K, L)$.
            \item $F^1\mathrm{Hom}_{D(\mathcal{A})}(K, L) = \{\varphi \in \mathrm{Hom}_{D(\mathcal{A})}(K, L) = \mathrm{Hom}_{DF(\mathcal{A})} (K, L) : \mathrm{gr}^n(\varphi) = 0 \text{ for all } n \in \mathbf{Z}\}$.
        \end{enumerate}
    \end{enumerate}
\end{lemma}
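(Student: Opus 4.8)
The plan is to realize the filtration as the one that a filtered injective resolution induces on cohomology, and then to extract the four properties from the associated spectral sequence. Since $L \in D^+F(\mathcal{A})$ has finite filtration, it lies in $D^+(\mathcal{A})$ and admits a filtered injective resolution $L \to I$ in the sense of \cite[I, Chapter V]{cotangent}: a filtered quasi-isomorphism to a bounded-below complex $I$ whose filtration splits in each degree with injective graded pieces, so that each $\mathrm{gr}^p I$ is an injective resolution of $\mathrm{gr}^p L$. As $I$ is $K$-injective, the underlying Hom complex $\mathrm{Hom}^\bullet(K, I)$ computes $R\mathrm{Hom}(K,L)$, and it carries the convolution filtration $F^p\mathrm{Hom}^\bullet(K,I) = \{f : f(F^iK) \subseteq F^{i+p}I \text{ for all } i\}$. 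I define $F^p\mathrm{Hom}_{D(\mathcal{A})}(K,L)$ to be the image of $H^0(F^p\mathrm{Hom}^\bullet(K,I)) = \mathrm{Hom}_{DF(\mathcal{A})}(K, L(p))$ in $H^0(\mathrm{Hom}^\bullet(K,I)) = \mathrm{Hom}_{D(\mathcal{A})}(K,L)$, where $L(p)$ denotes the filtration shift $F^i(L(p)) = F^{i+p}(L)$. Since $\mathrm{Hom}_{DF(\mathcal{A})}(K, L(p))$ and the forgetful maps are functorial in both variables, these subgroups are functorial, yielding the functor $(*)$ and the commutative square.

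Property (1) is immediate: the filtrations on $K$ and $I$ are finite, so the convolution filtration on $\mathrm{Hom}^\bullet(K,I)$ is finite, hence so is the induced filtration on $H^0$. For property (2) I would use the shift functor $L \mapsto L(p)$, which commutes with the forgetful functor to $D(\mathcal{A})$ and satisfies $M(q)(p) = M(p+q)$. An element $f \in F^p\mathrm{Hom}_{D(\mathcal{A})}(K,L)$ lifts to a filtered morphism $\tilde f : K \to L(p)$, and $g \in F^q\mathrm{Hom}_{D(\mathcal{A})}(L,M)$ lifts to $\tilde g : L \to M(q)$; then $\tilde g(p) \circ \tilde f : K \to M(p+q)$ is a filtered lift of $g \circ f$, so $g \circ f \in F^{p+q}$.

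For property (3) I turn to the spectral sequence of the filtered complex $\mathrm{Hom}^\bullet(K,I)$. Since the filtration on $I$ splits with injective graded pieces, $\mathrm{gr}^p\mathrm{Hom}^\bullet(K,I) \cong \prod_n \mathrm{Hom}^\bullet(\mathrm{gr}^n K, \mathrm{gr}^{n+p} I)$, whence
$$
E_1^{p,q} = \prod_{n \in \mathbf{Z}} \mathrm{Ext}^{p+q}_{D(\mathcal{A})}(\mathrm{gr}^n K, \mathrm{gr}^{n+p} L) \implies \mathrm{Ext}^{p+q}_{D(\mathcal{A})}(K, L),
$$
and the abutment filtration is exactly the $F$ defined above. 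Thus $F^p/F^{p+1}$ in total degree $0$ is $E_\infty^{p,-p}$, a subquotient of $E_1^{p,-p} = \prod_n \mathrm{Hom}_{D(\mathcal{A})}(\mathrm{gr}^n K, \mathrm{gr}^{n+p} L)$, proving (3).

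Property (4) is where the real work lies. First I record that the displayed alternative hypothesis implies the primary one, via $\mathrm{Ext}^m_{D(\mathcal{A})}(\mathrm{gr}^iK, \mathrm{gr}^jL) = \mathrm{Ext}^{m-i+j}_{D(\mathcal{A})}(\mathrm{gr}^iK[-i], \mathrm{gr}^jL[-j])$, whose exponent $m-i+j$ is negative for $i>j$ and $m \in \{0,-1\}$. Granting the vanishing, the $n=0$ case kills $E_1^{p,-p}$ for $p<0$, and exhaustiveness forces $F^0 = \mathrm{Hom}_{D(\mathcal{A})}(K,L)$. To upgrade this to $F^0 = \mathrm{Hom}_{DF(\mathcal{A})}(K,L)$ I must show the edge map $H^0(F^0\mathrm{Hom}^\bullet) \to H^0(\mathrm{Hom}^\bullet)$ is injective; its kernel is a quotient of $H^{-1}(\mathrm{Hom}^\bullet/F^0\mathrm{Hom}^\bullet)$, which has a filtration with subquotients of $E_1^{p,-1-p}$ for $p<0$, and these vanish by the $n=-1$ case. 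For (4b), the incoming differentials into $E_r^{0,0}$ originate from $E_1^{-r,r-1} = \prod_n \mathrm{Ext}^{-1}(\mathrm{gr}^n K, \mathrm{gr}^{n-r}L) = 0$ (again the $n=-1$ case), so $E_\infty^{0,0} \subseteq E_1^{0,0} = \prod_n \mathrm{Hom}_{D(\mathcal{A})}(\mathrm{gr}^nK, \mathrm{gr}^nL)$, and unwinding the edge map shows the composite $F^0 \to F^0/F^1 = E_\infty^{0,0} \hookrightarrow E_1^{0,0}$ is literally $\varphi \mapsto (\mathrm{gr}^n\varphi)_n$, giving $F^1 = \{\varphi : \mathrm{gr}^n\varphi = 0 \text{ for all } n\}$. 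The main obstacle is exactly this last step: matching the spectral sequence edge map with the concrete operation $\varphi \mapsto \mathrm{gr}(\varphi)$ and controlling the low-degree terms precisely enough to pin down $F^0$ and $F^1$; everything else is formal once the filtered resolution is in hand.
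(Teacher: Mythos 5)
Your proposal is correct and follows essentially the same route as the paper: the convolution filtration on the Hom complex of a filtered complex into a filtered injective resolution, the induced filtration on $H^0$, and the spectral sequence of that filtered complex for (3) and (4). The only notable divergence is that where the paper cites \cite[Proposition 3.1.4(i)]{BBD} for $\mathrm{Hom}_{D(\mathcal{A})}(K,L) = \mathrm{Hom}_{DF(\mathcal{A})}(K,L)$, you prove it directly by killing $H^{-1}$ of the quotient complex via the $\mathrm{Ext}^{-1}$ vanishing, which is a self-contained and equally valid substitute.
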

\begin{proof}
The commutative diagram in the statement is the outer square of a commutative diagram:
$$
\begin{tikzcd}
 DF(\mathcal{A}) ^{opp} \times D^+F(\mathcal{A}) \ar[r, "(**)"] \ar[d, "forget"] & DF(\mathbf{Z}) \ar[d, "forget"]  \ar[r, "(***)"] &FAb \ar[d, "forget"] \\
 D(\mathcal{A}) ^{opp} \times D^+(\mathcal{A}) \ar[r]  &D(\mathbf{Z}) \ar[r, "H^0"] &Ab. \\
 (K, L) \arrow[r, mapsto ] & R\mathrm{Hom}(K, L)
\end{tikzcd}
$$
We need to explain the two top horizontal arrows, but note first that the composition of the bottom two arrows is indeed the $\mathrm{Hom}$-functor.

The functor $(**)$ is the filtered variant of $R\mathrm{Hom}$ constructed in \cite{cotangent}. Let us briefly recall its construction: For $K \in DF(\mathcal{A})$ and $L \in D^+F(\mathcal{A})$ we may represent $K$ by a complex $K^\bullet$ with a finite filtration and we may represent $L$ by a complex $L^\bullet$ with a finite filtration such that each $F^iL^n$ is an injective object of $\mathcal{A}$ and $L^n = 0$ for $n \ll 0$ (we say $L^\bullet$ is \emph{of injective type}). Then the Hom complex $\mathrm{Hom}^\bullet (K^\bullet , L^\bullet)$ whose $i^{th}$ term is
$$
\mathrm{Hom}^i(K^\bullet, L^\bullet) = \prod_{n \in \mathbf{Z}} \mathrm{Hom}(K^n, L^{n+i})
$$
has a filtration such that $(f^n) \in \prod_{n \in \mathbf{Z}} \mathrm{Hom}(K^n, L^{n+i})$ lies in $F^p$ iff
$$
f^n(F^jK^n) \subset F^{j+p}L^{n+i} \text{ for all } n, j.
$$
Since the filtrations on $K^\bullet$ and $L^\bullet$ were assumed finite, the filtration on $\mathrm{Hom}^\bullet (K^\bullet, L^\bullet)$ is also finite so that we have indeed defined an object of $DF(\mathbf{Z})$ which we denote simply as $R\mathrm{Hom}(K,L)$. Since $L^\bullet$ is a bounded below complex of injectives, the underlying object of $D(\mathbf{Z})$ is the usual $R\mathrm{Hom}(K,L)$.

The functor (***) takes an object $K$ to $H^0(K)$ equipped with its induced filtration, namely
$$
F^i H^0(K) = \mathrm{Im}(H^0(F^iK) \to H^0(K)).
$$
Note that by our convention that filtrations are finite, $(***)$ lands in the full subcategory of $FAb$ consisting of abelian groups with a finite filtration, so that (1) is immediate. 

Let us prove (2). Represent $K, L, M$ by finitely filtered complexes $K^\bullet, L^\bullet, M^\bullet$ with $L^\bullet$ and $M^\bullet$ of injective type. Then 
$$
f \in \mathrm{Im}(H^0(F^p\mathrm{Hom}^\bullet (K^\bullet, L^\bullet)) \to H^0(\mathrm{Hom}^\bullet (K^\bullet, L^\bullet))
$$
so we can represent $f$ by a morphism of complexes $\varphi: K^\bullet \to L^\bullet$ such that $\varphi(F^jK^\bullet) \subset F^{j+p}L^\bullet$ for each $j$, and similarly we can represent $g$ by a morphism of complexes $\psi: L^\bullet \to M^\bullet$ such that $\psi(F^jL^\bullet) \subset F^{j+q}L^\bullet$ for each $j$. Then the composition $\psi \circ \varphi: K^\bullet \to M^\bullet$ represents $g \circ f : K \to M$ and satisfies $\psi \circ \varphi (F^j K^\bullet) \subset F^{j+p+q}M^\bullet$ for each $j$, thus $g \circ f \in F^{p+q}\mathrm{Hom}(K,M)$, as needed.

For (3) we consider the spectral sequence associated to the filtered complex $\mathrm{Hom}^\bullet(K^\bullet, L^\bullet)$ constructed in the second paragraph. It converges since the filtration is finite, and it has
$$
E_1^{p,q} = H^{p+q}(\mathrm{gr}^p(R\mathrm{Hom}(K,L))) = \prod_{n \in \mathbf{Z}} \mathrm{Ext}^{p+q}_{D(\mathcal{A})}(\mathrm{gr}^n(K), \mathrm{gr}^{n+p}(L))
$$
(see \cite[V 1.4.9]{cotangent}) and
$$
E_{\infty}^{p,-p} = F^p\mathrm{Hom}_{D(\mathcal{A})}(K, L)/F^{p+1}\mathrm{Hom}_{D(\mathcal{A})}(K, L)
$$
directly from the definition of the spectral sequence of a filtered complex. Since $E_{\infty}^{p,-p}$ is a subquotient of $E_1^{p,-p}$ this proves (3).

The first equality of (4)(a) follows from (3) since under the assumptions the group in (3) vanishes for $p<0$. The second equality is \cite[Proposition 3.1.4(i)]{BBD}. For (4)(b) consider the distinguished triangle 
$$
F^1R\mathrm{Hom}(K,L) \to F^0R\mathrm{Hom}(K,L) \to \mathrm{gr}^0R\mathrm{Hom}(K,L) \to .
$$
The map $H^0(F^0R\mathrm{Hom}(K,L)) \to H^0(\mathrm{gr}^0R\mathrm{Hom}(K,L))$ identifies with the canonical map $\mathrm{Hom}_{DF(\mathcal{A})}(K, L) \to \prod_{n \in \mathbf{Z}} \mathrm{Hom}_{D(\mathcal{A})}(\mathrm{gr}^n(K) , \mathrm{gr}^n(L)),$ see \cite[V 1.4.6 and 1.4.9]{cotangent} hence using the exact sequence of cohomology,
\begin{align*}
F^1\mathrm{Hom}_{D(\mathcal{A})}(K, L) &= \mathrm{Im}(H^0(F^1R\mathrm{Hom}(K,L)) \to H^0(R\mathrm{Hom}(K,L))) \\
&= \mathrm{Im}(H^0(F^1R\mathrm{Hom}(K,L)) \to H^0(F^0R\mathrm{Hom}(K,L))) \\
&= \mathrm{Ker}(\mathrm{Hom}_{DF(\mathcal{A})}(K, L) \to \prod_{n \in \mathbf{Z}} \mathrm{Hom}_{D(\mathcal{A})}(\mathrm{gr}^n(K) , \mathrm{gr}^n(L)).
\end{align*}
The second equality is (4)(a).
\end{proof}

\begin{proof}[Proof of Proposition \ref{prop-obstructions}]
There is a fully faithful functor $can: D^b(\mathcal{A}) \to D^bF(\mathcal{A})$ whose composition with the forgetful functor $D^bF(\mathcal{A}) \to D^b(\mathcal{A})$ is isomorphic to the identity and such that $\mathrm{gr}^p ( can(K) ) = H^{-p}(K)[p]$. It is the quasi-inverse of the equivalence of categories of \cite[Proposition 3.1.6]{BBD}. Informally, $can$ equips $K$ with its canonical filtration $F^iK = \tau_{\leq -i}K$. For $K,L \in D^b(\mathcal{A})$ we view them in $D^bF(\mathcal{A})$ via $can$ and therefore Lemma \ref{lemma-dfil-is-enhanced} gives the desired filtration on $\mathrm{Hom}_{D(\mathcal{A})}(K, L)$. All of (1)-(4) are now immediate.
\end{proof}

\end{appendices}

\newpage

\bibliographystyle{alpha}
\bibliography{references}

\textsc{Columbia University Department of Mathematics, 2990 Broadway, New York, NY 10027}

\href{mailto:nolander@math.columbia.edu}{nolander@math.columbia.edu}
\end{document}